\date{}
\newtheorem{theorem}{Theorem}[section]
\newtheorem{proposition}[theorem]{Proposition}
\newtheorem{remark}[theorem]{Remark}
\newtheorem{lemma}[theorem]{Lemma}
\newtheorem{claim}{Claim}
\newtheorem{problem}[theorem]{Problem}
\newcommand{\smallqed}{{\tiny ($\Box$)}}
\tikzstyle{vertex}=[circle, draw, inner sep=0pt, minimum size=6pt]
\newcommand{\es}{{\rm es}_{\chi}}
\newcommand{\vs}{{\rm vs}_{\chi}}
\newcommand{\ivs}{{\rm ivs}_{\chi}}
\begin{document}

\title{On the Chromatic Vertex Stability Number of Graphs}

\author{Saieed Akbari$^{a}$\thanks{Email: \texttt{s\textunderscore akbari@sharif.edu}}\and Arash Beikmohammadi$^{b}$\thanks{Email: \texttt{arash.beikmohammadi@gmail.com}}\and Sandi Klav\v zar$^{c,d,e}$\thanks{Email: \texttt{sandi.klavzar@fmf.uni-lj.si}}\and Nazanin Movarraei$^{f}$\thanks{Email: \texttt{nazanin.movarraei@gmail.com}}\smallskip}

\maketitle

\begin{center}
$^a$ Department of Mathematical Sciences, Sharif University of Technology, Iran
\medskip

$^b$ Department of Computer Engineering, Sharif University of Technology, Iran
\medskip

$^c$ Faculty of Mathematics and Physics, University of Ljubljana, Slovenia
\medskip

$^d$ Faculty of Natural Sciences and Mathematics, University of Maribor, Slovenia
\medskip

$^e$ Institute of Mathematics, Physics and Mechanics, Ljubljana, Slovenia
\medskip

$^f$ Department of Mathematics, Yazd University, Iran 
\end{center}

\begin{abstract}
The chromatic vertex (resp.\ edge) stability number ${\rm vs}_{\chi}(G)$ (resp.\ ${\rm es}_{\chi}(G)$) of a graph $G$ is the minimum number of vertices (resp.\ edges) whose deletion results in a graph $H$ with $\chi(H)=\chi(G)-1$. In the main result it is proved that if $G$ is a graph with $\chi(G) \in \{ \Delta(G), \Delta(G)+1 \}$, then ${\rm vs}_{\chi}(G) = {\rm ivs}_{\chi}(G)$, where ${\rm ivs}_{\chi}(G)$ is the independent chromatic vertex stability number. The result need not hold for graphs $G$ with $\chi(G) \le \frac{\Delta(G)+1}{2}$. It is  proved that if $\chi(G) > \frac{\Delta(G)}{2}+1$, then ${\rm vs}_{\chi}(G) = {\rm es}_{\chi}(G)$. A Nordhaus-Gaddum-type result on the chromatic vertex stability number is also given. 
\end{abstract}

\noindent
{\bf Keywords:} chromatic number, chromatic vertex stability number; chromatic edge stability number 

\noindent
{\bf MSC Subj.\ Class.\ (2010)}: 05C15

\baselineskip16pt

\section{Introduction}

Throughout this paper all graphs are finite, simple, and having at least one edge. The chromatic edge stability number ${\rm es}_{\chi}(G)$ of a graph $G$ is the minimum number of edges whose deletion results in a graph $H$ with $\chi(H)=\chi(G)-1$. This natural coloring concept was introduced in 1980 by Staton~\cite{staton-1980}, and independently rediscovered much later in~\cite{arumugam-2008}. Nevertheless, this concept has become the subject of wider interest only recently. The paper~\cite{kemnitz-2018} gives, among other results, a characterization of graphs with equal chromatic edge stability number and chromatic bondage number. In~\cite{bresar-2020}, edge-stability critical graphs were studied. The paper~\cite{akbari-2020} brings Nordhaus-Gaddum type inequality for $\es(G)$ (stronger than a related result from~\cite{arumugam-2008}), sharp upper bounds on $\es(G)$ in terms of size and of maximum degree, and a characterization of graphs with $\es(G) = 1$ among $k$-regular graphs for $k\le 5$. In~\cite{huang-2021+} progress on three open problems from~\cite{akbari-2020} are reported. The chromatic edge stability number has been generalized to arbitrary graphical invariants in~\cite{kemnitz-2021+}, where in particular it was considered with respect to the chromatic index, see also~\cite{akbari-2021+, alikhani-2020+}. 

Like edge stability numbers, vertex stability numbers were introduced in the 1980s or earlier. In~\cite{Bauer-1983}, the $\mu$-stability of a graph $G$, where $\mu$ is an arbitrary graph invariant, is defined as the minimum number of vertices whose removal changes $\mu$. The paper~\cite{Bauer-1983} then proceeds by investigating the stability with respect to the domination number and the independence number, which in turn led to a series papers investigation the stability with respect to these two invariants. In this paper, however, we are interested in the stability with respect to the chromatic number. At least as far as we know, this concept has not yet been explored (in~\cite{Bauer-1983},  the stability with respect to the chromatic number is briefly mentioned only in one sentence) which we find quite surprising since vertex versions are usually considered before edge versions. The closest investigation we are aware of is the paper~\cite{alikhani-2018}, where the stability with respect to the distinguishing number is investigated. 

Let $G$ be a graph. The {\emph{chromatic vertex stability number}} $\vs(G)$ of $G$ is the minimum number of vertices of $G$ such that their deletion results in a graph $H$ with $\chi(H)=\chi(G)-1$. For instance, it is straightforward to see that  $\vs(P)=3$, where $P$ is the Petersen graph. Note that if $\chi(G) = 3$, then $\vs(G)$ is just the minimum cardinality of a set $X\subseteq V(G)$ such that the graph induced by $V(G)\setminus X$ is bipartite. 

We also introduce the {\emph{independent chromatic vertex stability number}}, $\ivs(G)$, of $G$ as the minimum number of independent vertices such that their deletion results in a graph $H$ with $\chi(H)=\chi(G)-1$. Then our main result reads as follows. 

\begin{theorem}
\label{thm:main}
If $G$ is a graph with $\chi(G) \in \{ \Delta(G), \Delta(G)+1 \}$, then $\vs(G)=\ivs(G)$.
\end{theorem}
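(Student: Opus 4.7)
The inequality $\vs(G) \le \ivs(G)$ is immediate from the definitions, so the plan is to prove $\ivs(G) \le \vs(G)$: take any minimum vertex stability set $X$ and modify it, without enlarging it, into an independent one. The crucial initial observation is a degree condition. Writing $k = \chi(G)$, for each $x \in X$ the minimality of $X$ implies $X \setminus \{x\}$ is not a vertex stability set, so $\chi(G - (X \setminus \{x\})) \ne k - 1$; combined with $k - 1 \le \chi(G - (X \setminus \{x\})) \le k$ this forces $\chi(G - (X \setminus \{x\})) = k$. Hence no $(k-1)$-coloring of $G - X$ extends to $x$, so in every such coloring the neighbors of $x$ in $G - X$ realize all $k-1$ colors; in particular, $x$ has at least $k - 1$ neighbors in $V(G) \setminus X$.

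When $\chi(G) = \Delta(G) + 1$, the bound $\deg_G(x) \le \Delta(G) = k - 1$ combined with the above forces $\deg_G(x) = k - 1$ with every neighbor of $x$ lying outside $X$; hence $X$ has no internal edges and is already independent, which settles the theorem in this case.

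When $\chi(G) = \Delta(G) = k$, the same bound gives $\deg_G(x) \in \{k - 1, k\}$ with at most one neighbor in $X$, so $G[X]$ is a matching. If this matching is empty, we are done; otherwise, suppose $uv \in E(G[X])$. The key swap step is that for any $n \in N_G(u) \cap (V(G) \setminus X)$, the rainbow property of $u$'s outside neighbors in any $(k-1)$-coloring $c$ of $G - X$ lets us extend $c|_{(G - X) - n}$ by $u \mapsto c(n)$; minimality of $|X|$ then forces $\chi(G - X') = k - 1$ where $X' := (X \setminus \{u\}) \cup \{n\}$, so $X'$ is itself a minimum vertex stability set. Choosing $X$ to minimize $|E(G[X])|$ among all minimum vertex stability sets, the idea is to exhibit an $n$ for which $G[X']$ has strictly fewer edges, contradicting the choice of $X$.

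The main obstacle is to rule out the trapped configuration in which every $n \in N_G(u) \setminus X$ has a unique neighbor in $X \setminus \{u\}$, so that each single swap merely shifts the matching edge $uv$ instead of eliminating it. I plan to handle this by the symmetric analysis at $v$ and by performing a simultaneous two-vertex swap: for suitable $n, m \in N_G(u) \cap N_G(v) \cap (V(G) \setminus X)$ with $n \ne m$ and $n \not\sim m$, the same rainbow argument shows $(X \setminus \{u, v\}) \cup \{n, m\}$ is a minimum vertex stability set with strictly fewer matching edges. Existence of such a non-adjacent common-neighbor pair should follow from the observation that $G[N_G(u) \cap N_G(v) \setminus X]$ is $(k-2)$-colorable, since $u, v$ together with their common neighborhood induce a subgraph of chromatic number at most $k$ and $u, v$ already consume two colors; the remaining cases, where the common neighborhood is too small or itself forms a clique, are handled by a separate analysis exploiting the presence of isolated vertices of $G[X]$.
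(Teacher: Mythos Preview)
Your opening steps match the paper: the case $\chi(G)=\Delta(G)+1$, the observation that each $x\in X$ sees all $k-1$ colors outside $X$, and the conclusion that $G[X]$ is a matching are all correct and correspond to the paper's Claims~1--4. The single swap $X\mapsto (X\setminus\{u\})\cup\{n\}$ is also one of the paper's moves.

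The gap is in your handling of the ``trapped'' configuration. You propose to escape it by a double swap at a non-adjacent pair $n,m\in N_G(u)\cap N_G(v)$, relegating the case of a small or clique common neighborhood to an unspecified ``separate analysis exploiting the presence of isolated vertices of $G[X]$.'' But in the minimal configuration this common neighborhood is \emph{always empty}: the paper proves (its Claim~7) that once $X$ minimizes $|E(G[X])|$ one has $N_G(u)\cap N_G(v)=\emptyset$. Thus your double-swap mechanism never fires, and the ``remaining case'' you defer is the whole problem. Note also that your $(k-2)$-colorability observation, even when the common neighborhood is nonempty, does not by itself produce a non-adjacent pair unless there are more than $k-2$ common neighbors; and $|N_G(u)\cap N_G(v)|\le k-1$ is all you have.

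What the paper actually does for this hard case is substantial and not visible in your plan. First, induction on $\vs(G)$ is used to force $E(G[X])=\{uv\}$ exactly (your sketch never sets up this induction, and without it the ``isolated vertices of $G[X]$'' have no leverage). Second, a separate preparatory lemma---a refinement of Brooks' theorem for connected graphs with $\chi=\Delta$ and $\vs=1$---is used to show that the component of $G[(V(G)\setminus X)\cup\{u\}]$ containing $u$ is $K_\Delta$ or an odd cycle, and similarly for $v$. Only with that rigidity, together with a $(\Delta-1)$-coloring of $G-X$ chosen so that the smallest color class is minimum, can one run the final path/odd-cycle analysis on the bipartite-like graph $G[X\cup C'_1]$ that yields the contradiction. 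Since $N_G(u)\cap N_G(v)=\emptyset$ rules out any purely local swap at $u,v$, something of this strength appears to be genuinely needed; your outline does not yet contain it.
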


The paper is structured as follows. In the rest of this section we recall needed definitions and concepts. In Section~\ref{sec:proof} we prove Theorem~\ref{thm:main}. In the subsequent section we show that Theorem~\ref{thm:main} need not hold for graphs $G$ with $\chi(G) \le \frac{\Delta(G)+1}{2}$, and discuss a possible threshold function $f(\Delta(G))$ that would guarantee that if $\chi(G) \ge f(\Delta(G))$, then $\vs(G)=\ivs(G)$. In the final section we prove that if $\chi(G) > \frac{\Delta(G)}{2}+1$, then $\vs(G)=\es(G)$, and give a  Nordhaus-Gaddum-type result on the chromatic vertex stability number. 

Given a graph $G=(V(G), E(G))$, a function $c:V(G) \to [k] = \{1,\ldots , k\}$ with $c(v) \neq c(u)$ for each edge $uv$ is a {\emph{proper $k$-coloring}} of $G$. The minimum $k$ for which $G$ admits a proper $k$-coloring is the {\emph{chromatic number}} $\chi(G)$ of $G$. If $c$ is a proper coloring of $G$, then the set of all vertices of $G$ with color $i$, $i \in [\chi(G)]$, is a {\em color class} and will be  denoted by $C_i$. The \emph{open neighborhood} of a vertex $v$ in $G$ is the set of neighbors of $v$, denoted by $N_G(v)$, whereas the \emph{closed neighborhood} of $v$ is $N_G[v] = N_G(v) \cup \{v\}$. The \emph{degree} of a vertex $v$ in $G$ is denoted by $d_G(v)$. The subgraph of $G$ induced by $A \subseteq V(G)$ will be denoted by $G \left[ A \right]$. The complete graph of order $n$ is denoted by $K_n$ and the complement of a graph $G$ by $\overline{G}$. Finally, the order of $G$ will be denoted by $n(G)$.

\section{Proof of Theorem~\ref{thm:main}}
\label{sec:proof}

The following lemma follows directly from Brooks' Theorem (cf.~\cite[p.197]{west-2001}).

\begin{lemma}\label{lemma:brooks}
Let $G$ be a connected graph with $\chi(G)=\Delta(G)+1$. If $\Delta(G) \neq 2$, then $G \cong K_{\Delta(G)+1}$, and if $\Delta(G)=2$, then $G \cong C_n$ for some odd $n$.
\end{lemma}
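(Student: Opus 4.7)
The plan is to invoke Brooks' Theorem in its standard form and then do a short case analysis on $\Delta(G)$. Recall that Brooks' Theorem states that for every connected graph $G$, $\chi(G) \le \Delta(G)$ unless $G$ is a complete graph or an odd cycle, in which case $\chi(G) = \Delta(G)+1$. So our hypothesis $\chi(G) = \Delta(G)+1$ forces $G$ to lie in the exceptional family: $G$ is either $K_m$ for some $m$ or an odd cycle $C_n$.

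First I would dispatch the case $\Delta(G) \neq 2$. If $G$ were an odd cycle, then $\Delta(G) = 2$, contradicting the assumption, so $G$ must be complete. Writing $G \cong K_m$ and using $\chi(K_m) = m$ and $\Delta(K_m) = m-1$, the equality $\chi(G) = \Delta(G)+1$ is automatic and comparing orders gives $m = \Delta(G)+1$, hence $G \cong K_{\Delta(G)+1}$, as required.

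Next I would handle $\Delta(G) = 2$. Here $\chi(G) = 3$ and $G$ is a connected graph of maximum degree $2$, so $G$ is a path or a cycle. Paths are bipartite, so $G$ must be a cycle $C_n$, and $\chi(C_n) = 3$ forces $n$ to be odd. (Note that the subcase $G \cong K_3$ is subsumed by $C_3$, so no separate treatment is needed.)

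The only thing that might look like an obstacle is an edge case in $\Delta(G)$: for instance $\Delta(G) = 1$ would give $\chi(G) = 2$ and $G \cong K_2$, which is consistent with the ``$K_{\Delta(G)+1}$'' conclusion, and $\Delta(G) = 0$ is excluded by our standing assumption that graphs have at least one edge. So there is no real obstacle, and the proof is essentially a single invocation of Brooks' Theorem followed by two short observations.
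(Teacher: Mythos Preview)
Your proposal is correct and takes essentially the same approach as the paper: the paper simply states that the lemma ``follows directly from Brooks' Theorem'' and gives no further argument, so your write-up is just a fleshed-out version of that one-line appeal.
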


For the proof of the theorem, we also need the following lemma. 

\begin{lemma}\label{lemma:delta}
Let $G$ be a connected graph with $\chi(G)=\Delta(G)$ and $\vs(G)=1$. Then there exists $v \in V(G)$ such that $d_G(v)=\Delta(G)$ and $\chi(G-v)=\Delta(G)-1$.  
\end{lemma}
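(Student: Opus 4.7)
The plan is a proof by contradiction. Set $\Delta=\Delta(G)$ and let $S=\{v\in V(G):\chi(G-v)=\Delta-1\}$; because $\vs(G)=1$, $S\neq\emptyset$. Assume toward contradiction that no element of $S$ has degree $\Delta$.

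First I would pin down the degrees of the members of $S$. If $v\in S$ with $d_G(v)\le\Delta-2$, then any proper $(\Delta-1)$-coloring of $G-v$ would extend to $G$ by giving $v$ a color missing from $N_G(v)$, contradicting $\chi(G)=\Delta$. So $d_G(v)\ge\Delta-1$, and combined with the contradiction hypothesis, $d_G(v)=\Delta-1$ for every $v\in S$.

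The heart of the argument is the following propagation claim: if $v\in S$, then $N_G(v)\subseteq S$. Fix a proper $(\Delta-1)$-coloring $c$ of $G-v$. The previous step forces each of the $\Delta-1$ colors to appear \emph{exactly} once on $N_G(v)$, otherwise $c$ would extend to $G$. For $w\in N_G(v)$ with $c(w)=j$, the coloring obtained from $c$ by assigning $v$ the color $j$ is a proper $(\Delta-1)$-coloring of $G-w$: in $G-w$ the unique neighbor of $v$ of color $j$ has been deleted, so the new assignment is consistent on all edges incident with $v$, while nothing else has changed. Hence $\chi(G-w)\le\Delta-1$, and since $\chi(G-w)\ge\chi(G)-1=\Delta-1$, we get $w\in S$.

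Combining these facts, $S$ is a nonempty subset of $V(G)$ that is closed under taking neighbors in $G$ and whose members all have degree $\Delta-1$. Since $G$ is connected, iterating the propagation claim starting from any vertex in $S$ forces $V(G)=S$, so $G$ is $(\Delta-1)$-regular — contradicting the fact that $G$ contains a vertex of degree $\Delta$. The only non-routine ingredient is the propagation step, which works precisely because the equality $d_G(v)=\Delta-1$ makes any $(\Delta-1)$-coloring of $G-v$ rigid on $N_G(v)$; everything else is bookkeeping with connectedness.
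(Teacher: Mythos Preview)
Your proof is correct and follows essentially the same approach as the paper: both define the set $S$ of vertices whose removal drops the chromatic number, and both hinge on the observation that if $v\in S$ has degree $\Delta-1$ then the colors on $N_G(v)$ are all distinct, so recoloring $v$ with the color of any neighbor $w$ shows $w\in S$. The only cosmetic difference is that the paper picks a boundary edge $xy$ with $x\in S$, $y\notin S$ and concludes directly that $d_G(x)=\Delta$, whereas you phrase the same idea as a global propagation ($S$ closed under neighbors) followed by a regularity contradiction.
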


\begin{proof}
Let $S = \{u\in V(G):\ \chi(G-u)=\Delta(G)-1\}$. Since $\vs(G)=1$, we have $S \neq \emptyset$. If $S=V(G)$, then there exists $u \in S$ such that $d_G(u)=\Delta(G)$, as desired. Otherwise, $S \neq V(G)$ and by the connectivity of $G$ there exists $xy \in E(G)$ such that $x \in S$ and $y \in V(G) \setminus S$. Let $c$ be a proper $(\Delta(G)-1)$-coloring of $G-x$. We now claim that $d_G(x) = \Delta(G)$. By contradiction, assume that $d_G(x)<\Delta(G)$ and consider the following two cases. 

\medskip\noindent
{\bf{Case 1.}} $d_G(x)<\Delta(G)-1$. \\
There is a color class $C_i$ such that $i \in [\Delta(G)-1]$ and $N_G(x) \cap C_i = \emptyset$. So, we can color $x$ by $i$ to obtain a proper $(\Delta(G)-1)$-coloring of $G$, a contradiction.

\medskip\noindent
{\bf{Case 2.}} $d_G(x) = \Delta(G)-1$. \\
We may assume that $|N_G(x) \cap C_i|=1$ for each $i \in [\Delta(G)-1]$, for otherwise we may proceed as in Case~1 to get a contradiction. But then we can color $x$ by $c(y)$ to obtain a proper $(\Delta(G)-1)$-coloring of $G-y$. Hence, $y \in S$, a final contradiction.
\end{proof}

Note that in Lemma~\ref{lemma:delta}, the only graph that applies to the case $\chi(G)=\Delta(G)=2$ and $\vs(G)=1$ is $P_3$. Also note that the proof of Lemma~\ref{lemma:delta} asserts that every vertex in $S$ that has a neighbor in $V(G)\setminus S$ has maximum degree. We now proceed with the proof of the theorem.

{\noindent \it Proof of Theorem~\ref{thm:main}.} If $\chi(G)=\Delta(G)+1$, then $\vs(G)$ is the number of connected components $C$ of $G$ with $\chi(C)=\Delta(G)+1$ which are complete graphs or odd cycles by Lemma~\ref{lemma:brooks} for which $\vs(C)=1$ holds. We can remove one vertex from each of these components to reduce the chromatic number of $G$ by $1$. The set of these removed vertices is an independent set which means that $\vs(G)=\ivs(G)$, as desired.

Now, let $\chi(G)=\Delta(G)$. If $\Delta(G) \leq 2$, then it is easy to see that $\vs(G)=\ivs(G)$. Hence we may assume in the rest of the proof that $\Delta(G) \geq 3$. We proceed by induction on $\vs(G)$. Clearly, the assertion holds for $\vs(G)=1$.

Let $S \subseteq V(G)$ be a set of vertices such that $|S|=\vs(G) \geq 2$, $\chi(G \setminus S)=\Delta(G) -1$, and $|E(G[S])|$ is as small as possible. We are going to prove that $S$ is an independent set. By contradiction, suppose that $S$ is not an independent set. So, there are $u,v \in S$ such that $uv \in E(G)$. Set $G'=G \setminus S=G[V(G) \setminus S]$ and let $c$ be a proper $(\Delta(G)-1)$-coloring of $G'$.

\begin{claim}\label{claim:S}
Let $w \in S$. Then $N_G(w) \cap C_i \neq \emptyset$, for each $i \in [\Delta(G)-1]$.
\end{claim}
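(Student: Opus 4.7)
The plan is to argue by contradiction. Suppose there is a color $i \in [\Delta(G)-1]$ with $N_G(w) \cap C_i = \emptyset$ for some $w \in S$. The idea is that such a $w$ could be ``absorbed'' into the coloring of $G'$, so that we would only need to remove the vertices of $S\setminus\{w\}$ to lower the chromatic number by one, contradicting the minimality of $|S| = \vs(G)$.

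Concretely, set $S' = S \setminus \{w\}$, so $|S'| = \vs(G) - 1$. Define $c':V(G)\setminus S' \to [\Delta(G)-1]$ by $c'(x) = c(x)$ for $x \in V(G)\setminus S$ and $c'(w) = i$. To check that $c'$ is a proper coloring of $G\setminus S'$, note that $V(G)\setminus S' = (V(G)\setminus S) \cup \{w\}$, so the only new edges to verify are those from $w$ to vertices of $V(G)\setminus S$. Every such neighbor of $w$ lies in $N_G(w) \cap (V(G)\setminus S) \subseteq N_G(w)$, and by the assumption none of them receives color $i$ under $c$. Hence $c'$ is a proper $(\Delta(G)-1)$-coloring of $G\setminus S'$, and so $\chi(G\setminus S') \le \Delta(G)-1$.

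On the other hand, $G\setminus S \subseteq G\setminus S'$, so $\chi(G\setminus S') \ge \chi(G\setminus S) = \Delta(G)-1$, and together we get $\chi(G\setminus S') = \Delta(G)-1 = \chi(G)-1$. This contradicts the definition of $\vs(G)$, because $|S'| = \vs(G) - 1 < \vs(G)$.

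The main thing to be careful about is the interpretation of the definition of $\vs(G)$ in the last line: we used that no set of fewer than $\vs(G)$ vertices can have its deletion produce a graph with chromatic number equal to $\chi(G)-1$, which is exactly what the definition guarantees. Everything else is a direct ``paint the missing color'' argument, so I do not anticipate any real obstacle.
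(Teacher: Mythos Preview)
Your argument is correct and matches the paper's proof almost verbatim: both argue by contradiction, color $w$ with the missing color $i$, and observe that $S' = S \setminus \{w\}$ would then witness $\vs(G) \le |S| - 1$. Your extra line verifying $\chi(G\setminus S') = \Delta(G)-1$ exactly (rather than just $\le$) is a minor bit of additional care not spelled out in the paper, but otherwise the approaches are identical.
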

By contradiction, suppose that there is a color class $C_i$ such that $N_G(w) \cap C_i = \emptyset$. Now, we can color $w$ by $i$ to obtain a proper $(\Delta(G)-1)$-coloring of $G \setminus S'$, where $S'=S \setminus \{ w \}$. So, $\vs(G) \leq |S'| = |S|-1$, a contradiction. \smallqed

\begin{claim}\label{claim:S3}
$|N_G(u) \cap C_i| = 1$, $|N_G(v) \cap C_i| = 1$, for each $i \in [\Delta(G)-1]$.
\end{claim}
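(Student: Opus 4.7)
The plan is to deduce both equalities from a tight degree count on $u$ (and by symmetry on $v$), using only the previous Claim~\ref{claim:S} together with the hypothesis $uv\in E(G)$ with $v\in S$.

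First I would invoke Claim~\ref{claim:S} applied to $w=u$: this yields $N_G(u)\cap C_i\neq\emptyset$ for every $i\in[\Delta(G)-1]$. Since the $\Delta(G)-1$ color classes $C_1,\dots,C_{\Delta(G)-1}$ are pairwise disjoint and all lie in $V(G)\setminus S$, $u$ has at least $\Delta(G)-1$ distinct neighbors inside $V(G)\setminus S$. Next I would add the neighbor $v$: because $v\in S$ and $S\cap(V(G)\setminus S)=\emptyset$, the vertex $v$ contributes an extra unit to the degree of $u$, giving $d_G(u)\ge (\Delta(G)-1)+1=\Delta(G)$. Combined with $d_G(u)\le\Delta(G)$, the inequality is saturated: $d_G(u)=\Delta(G)$, the only neighbor of $u$ in $S$ is $v$, and in particular $|N_G(u)\cap C_i|=1$ for every $i\in[\Delta(G)-1]$. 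Indeed, if some $|N_G(u)\cap C_i|\ge 2$, then together with the at-least-one neighbor in each of the other $\Delta(G)-2$ classes plus $v$, the degree of $u$ would strictly exceed $\Delta(G)$, a contradiction.

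Finally, the argument is symmetric in $u$ and $v$ (the only asymmetric ingredient was $uv\in E(G)$, which is already symmetric), so the same reasoning with the roles of $u$ and $v$ interchanged yields $|N_G(v)\cap C_i|=1$ for every $i\in[\Delta(G)-1]$.

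I do not anticipate any serious obstacle in this claim: it is a clean pigeonhole-style degree count. The one subtlety worth flagging is ensuring the $+1$ from $v$ is genuinely additive, which is immediate from $S\cap(V(G)\setminus S)=\emptyset$. Note that the minimality of $|E(G[S])|$ is not used here; that hypothesis will presumably be invoked in subsequent claims to derive a contradiction with the assumption that $S$ fails to be independent.
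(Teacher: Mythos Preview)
Your proof is correct and follows essentially the same approach as the paper: invoke Claim~\ref{claim:S} for the lower bound $|N_G(u)\cap C_i|\ge 1$, then use the edge $uv$ with $v\in S$ to force $d_G(u)>\Delta(G)$ if any class were hit twice. Your version spells out the disjointness of $S$ from the color classes more explicitly, but the argument is the same degree count.
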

By Claim~\ref{claim:S}, $|N_G(u) \cap C_i| \geq 1$, $|N_G(v) \cap C_i| \geq 1$, for each $i \in [\Delta(G)-1]$. If $|N_G(u) \cap C_j| > 1$ for some $j \in [\Delta(G)-1]$, then because $uv \in E(G)$, we get $d_G(u) > \Delta(G)$, a contradiction. \smallqed

\begin{claim}\label{claim:S2}
Let $w \in S$. Then $|N_G(w) \cap C_i| \leq 2$, for each $i \in [\Delta(G)-1]$.
\end{claim}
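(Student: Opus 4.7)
The plan is to prove Claim~\ref{claim:S2} by a direct degree count, essentially the same pigeonhole argument used in Claim~\ref{claim:S3} but weakened because we no longer assume the chosen $w$ has a neighbor inside $S$. Fix an arbitrary $w\in S$ and a color $i\in[\Delta(G)-1]$. Since $C_1,\dots,C_{\Delta(G)-1}$ partition $V(G')=V(G)\setminus S$, every neighbor of $w$ lies either in $S$ or in exactly one color class, giving the identity
$$d_G(w)=|N_G(w)\cap S|+\sum_{j=1}^{\Delta(G)-1}|N_G(w)\cap C_j|.$$

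By Claim~\ref{claim:S}, each term $|N_G(w)\cap C_j|$ is at least $1$, so the $\Delta(G)-2$ summands with $j\neq i$ contribute at least $\Delta(G)-2$ to the right-hand side. Combining this with $|N_G(w)\cap S|\geq 0$ and $d_G(w)\leq \Delta(G)$ then yields
$$|N_G(w)\cap C_i|\leq d_G(w)-\sum_{j\neq i}|N_G(w)\cap C_j|\leq \Delta(G)-(\Delta(G)-2)=2,$$
which is exactly what is claimed.

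I do not foresee any real obstacle here: the whole argument is a one-line consequence of Claim~\ref{claim:S} together with the maximum-degree bound $d_G(w)\leq \Delta(G)$. The only subtle point worth flagging is that the edge $uv\in E(G[S])$ plays no role in this particular step. The existence of that edge was needed earlier in order to set up the proper $(\Delta(G)-1)$-coloring $c$ of $G'$ under the contradiction hypothesis, and it sharpens the bound from $2$ to $1$ in Claim~\ref{claim:S3} for the two endpoints $u,v$; but for a \emph{general} $w\in S$ we cannot rule out the possibility that $w$ has no neighbor in $S$, and then one color class may legitimately contain two of its neighbors. That is precisely why the present claim can only deliver the weaker upper bound $2$.
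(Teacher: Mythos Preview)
Your proof is correct and is essentially the same pigeonhole/degree-count argument as the paper's: Claim~\ref{claim:S} forces at least one neighbor of $w$ in each of the $\Delta(G)-1$ color classes, so a class with three or more neighbors would push $d_G(w)$ above $\Delta(G)$. The paper phrases this as a one-line contradiction while you write out the degree identity explicitly, but the content is identical.
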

By Claim~\ref{claim:S}, $|N_G(w) \cap C_i| \geq 1$, for each $i \in [\Delta(G)-1]$. If $|N_G(w) \cap C_j| > 2$ for some $j \in [\Delta(G)-1]$, then we can conclude that $d_G(w) > \Delta(G)$, a contradiction. \smallqed

\begin{claim}\label{claim:degreeInS}
$\Delta(G[S]) \leq 1$.
\end{claim}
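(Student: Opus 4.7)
The plan is very direct: bound the degree of an arbitrary $w\in S$ by summing its neighbors inside and outside $S$, and then compare with $\Delta(G)$. The key observation is that Claim~\ref{claim:S} already supplies a strong lower bound on the outside-$S$ neighbors of every vertex of $S$, so no further structural analysis is needed.

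Concretely, I would argue by contradiction. Fix $w\in S$ and suppose that $d_{G[S]}(w) \ge 2$. By Claim~\ref{claim:S}, $w$ has at least one neighbor in each color class $C_1,\ldots,C_{\Delta(G)-1}$ of the chosen proper $(\Delta(G)-1)$-coloring $c$ of $G' = G\setminus S$. Since the color classes are pairwise disjoint and all lie in $V(G)\setminus S$, these contribute at least $\Delta(G)-1$ distinct neighbors of $w$, none of which is among $w$'s neighbors in $S$. Adding the $\ge 2$ neighbors of $w$ inside $S$, we get
$$d_G(w) \;\ge\; (\Delta(G)-1) + 2 \;=\; \Delta(G)+1,$$
which contradicts $\Delta(G)$ being the maximum degree of $G$. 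Hence $d_{G[S]}(w) \le 1$ for every $w\in S$, giving $\Delta(G[S]) \le 1$.

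There is really no obstacle here: the whole argument is a single degree count, and it relies only on Claim~\ref{claim:S} (the sharper Claims~\ref{claim:S3} and~\ref{claim:S2} are not needed for this bound and are presumably set up for the subsequent steps of the proof of the theorem). I expect the written proof to occupy at most a few lines.
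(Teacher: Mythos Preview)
Your proof is correct and is essentially identical to the paper's: the paper observes that Claim~\ref{claim:S} gives $|V(G')\cap N_G(w)|\ge \Delta(G)-1$ for every $w\in S$, and then immediately deduces $d_{G[S]}(w)\le 1$. Your contradiction framing is just a repackaging of this one-line degree count.
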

By Claim~\ref{claim:S}, if $w \in S$, then $|V(G') \cap N_G(w)| \geq \Delta(G)-1$. So, $d_{G[S]}(w) \leq 1$.  \smallqed 

\begin{claim}\label{claim:S1}
$E(G[S])=\{ uv \}$.
\end{claim}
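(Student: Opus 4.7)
The plan is to argue by contradiction: assuming $|E(G[S])|\ge 2$, produce a minimum stability set $\tilde S$ with $|E(G[\tilde S])|<|E(G[S])|$, contradicting the choice of $S$. By Claim~\ref{claim:degreeInS}, $G[S]$ is a matching, so there exists an edge $u'v'\in E(G[S])$ disjoint from $uv$. Running the proof of Claim~\ref{claim:S3} for the pair $(u',v')$ in place of $(u,v)$ yields $|N_G(u')\cap C_i|=|N_G(v')\cap C_i|=1$ for every $i\in[\Delta(G)-1]$; combined with Claim~\ref{claim:degreeInS}, this gives $d_G(u')=d_G(v')=\Delta(G)$ with $v'$ (resp.\ $u'$) being the unique $S$-neighbor of $u'$ (resp.\ $v'$).

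The central gadget is a single-vertex swap. For $y\in N_G(u')\cap V(G')$, set $S^*=(S\setminus\{u'\})\cup\{y\}$ and define $c^*$ by $c^*(x)=c(x)$ for $x\in V(G')\setminus\{y\}$ and $c^*(u')=c(y)$. Because $y$ is the unique $V(G')$-neighbor of $u'$ of color $c(y)$, $c^*$ is a proper $(\Delta(G)-1)$-coloring of $G[V(G)\setminus S^*]$; hence $S^*$ is again a minimum stability set and
\[
|E(G[S^*])|=|E(G[S])|-1+|N_G(y)\cap(S\setminus\{u'\})|.
\]
If some $y\in N_{G'}(u')$ has no neighbor in $S\setminus\{u'\}$, then $|E(G[S^*])|<|E(G[S])|$, contradicting the minimality of $|E(G[S])|$ and finishing the proof.

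Otherwise every $y\in N_{G'}(u')$ has a neighbor in $S\setminus\{u'\}$. I would then complete the argument with the double swap $S^{**}=(S\setminus\{u',v'\})\cup\{y_1,y_2\}$ for distinct $y_1,y_2\in N_{G'}(u')$, coloring $u'$ and $v'$ by $c(y_1)$ and $c(y_2)$ respectively. This is again a minimum stability set, and a direct count gives
\[
|E(G[S^{**}])|=|E(G[S])|-1+|N_G(y_1)\cap(S\setminus\{u',v'\})|+|N_G(y_2)\cap(S\setminus\{u',v'\})|+[y_1y_2\in E(G)].
\]
When, for every $y\in N_{G'}(u')$, the unique $S\setminus\{u'\}$-neighbor is $v'$, we obtain $N_{G'}(u')=N_{G'}(v')$ and the $S$-neighbors of each $y_i$ are exactly $\{u',v'\}$; so either some non-adjacent pair $y_1,y_2\in N_{G'}(u')$ exists and yields $|E(G[S^{**}])|<|E(G[S])|$ (a contradiction), or $\{u',v'\}\cup N_{G'}(u')$ is a $(\Delta(G)+1)$-clique in $G$, contradicting $\chi(G)=\Delta(G)$.

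The main obstacle is the remaining subcase, in which some $y\in N_{G'}(u')$ has an $S\setminus\{u'\}$-neighbor $w$ that is isolated (rather than matched) in $G[S]$. Here the single swap only produces a minimum-edge minimum-stability set $S^*$ with the \emph{same} number of edges as $S$, now containing the new edge $yw\in E(G[S^*])$; one then iterates, applying the entire chain of deductions with $(y,w)$ in place of $(u',v')$ inside $S^*$. The delicate point is to guarantee termination: one has to exploit the rigid local structure forced by Claims~\ref{claim:S3} and~\ref{claim:S2} on both endpoints of every newly swapped-in edge, and show that after finitely many iterations the process produces either an edge-strict decrease or a $(\Delta(G)+1)$-clique in $G$.
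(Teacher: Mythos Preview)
Your swapping strategy is entirely different from the paper's argument, and as you yourself flag, it is incomplete: in the final subcase you only \emph{describe} an iteration (replace $(u',v')$ by $(y,w)$ in the new set $S^*$ and repeat) and then say that termination is ``the delicate point'' that ``one has to'' establish. No termination measure is given, and it is not clear that one exists; the swap replaces one matched edge of $G[S]$ by another while keeping $|E(G[S^*])|$ constant, so the obvious invariants do not decrease. There is also a smaller lacuna earlier: when you pass from ``every $y\in N_{G'}(u')$ has a neighbour in $S\setminus\{u'\}$'' to the dichotomy ``that neighbour is $v'$'' versus ``that neighbour is an isolated vertex of $G[S]$'', you silently use both that this neighbour is \emph{unique} and that it cannot be a matched vertex of $G[S]$ other than $v'$. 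These facts are true (they follow by re-running the proof of Claim~\ref{claim:degreeInS} for the new minimum set $S^*$), but you should say so.

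The paper avoids all of this by using the very induction hypothesis that frames the whole proof of Theorem~\ref{thm:main}. One removes $u$: since $\vs(G)\ge 2$ we have $\chi(G-u)=\Delta(G)$, and $\Delta(G)\in\{\Delta(G-u),\Delta(G-u)+1\}$, so the induction hypothesis (on $\vs$) applies to $G-u$ and yields an \emph{independent} minimum set $S'\subseteq V(G)\setminus\{u\}$ with $\chi((G-u)\setminus S')=\Delta(G)-1$ and $|S'|=|S|-1$. Then, by the same counting as in Claim~\ref{claim:degreeInS}, $u$ has at most one neighbour in $S'$, so $|E(G[S'\cup\{u\}])|\le 1$; the minimality of $|E(G[S])|$ forces $|E(G[S])|=1$. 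This is a two-line argument, so I would abandon the swap machinery here and invoke induction instead.
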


Since $\vs(G) \geq 2$ we have $\chi(G-u)=\Delta(G)$. Moreover, $\chi(G-u) \leq \Delta(G-u)+1$, so $\Delta(G) \in \{\Delta(G-u), \Delta(G-u)+1\}$. Also, $\vs(G-u)=|S|-1 \geq 1$.
By induction, there is an independent set $S' \subseteq V(G) \setminus \{ u \}$ such that $\chi((G - u)\setminus S') = \Delta(G)-1$ and $|S'|=|S|-1$. Since $S'$ is an independent set, similarly as in Claim~\ref{claim:degreeInS}, one can deduce that $\Delta(G[S' \cup \{ u \}]) \leq 1$. Thus, $|E(G[S' \cup \{ u \}])| \leq 1$ and we get $1 \leq |E(G[S])| \leq |E(G[S' \cup \{ u \}])| \leq 1$, where the middle inequality follows by the minimality assumption on the number of edges in $G[S]$. Hence, $E(G[S])=\{ uv \}$, as desired.  \smallqed

\medskip
Now, let $H=G[V(G') \cup \{ u \}]$ and let $H_u$ be the connected component of $H$ containing $u$. Clearly, if $H$ is connected, then $H = H_u$. Similarly, let $H'=G[V(G') \cup \{ v \}]$ and let $H_v$ be the connected component of $H'$ containing $v$. Note that $\chi(H)=\chi(H')=\Delta(G)$.

\begin{claim}\label{claim:complete}
If $\Delta(G) > 3$, then $H_u \cong K_{\Delta(G)}$ and $H_v \cong K_{\Delta(G)}$. If $\Delta(G)=3$, then $H_u \cong C_{2a+1}$ and $H_v \cong C_{2b+1}$ for some natural numbers $a$ and $b$. 
\end{claim}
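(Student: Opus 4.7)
The plan is to apply Brooks' theorem, in the form of Lemma~\ref{lemma:brooks}, to each of $H_u$ and $H_v$, so the goal reduces to establishing $\chi(H_u) = \Delta(H_u)+1 = \Delta(G)$. By symmetry between the roles of $u$ and $v$, I will only treat $H_u$; $H_v$ is handled by the identical argument with $u$ and $v$ interchanged.

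First I would collect three basic facts about $H_u$. Removing $u$ from $H$ yields $G'$, so $\chi(H-u) = \Delta(G)-1$; together with $\chi(H)=\Delta(G)$ this forces $\chi(H_u) = \Delta(G)$, $\vs(H_u) = 1$, and also that every other component of $H$ has chromatic number at most $\Delta(G)-1$. Claim~\ref{claim:S3} gives $u$ exactly $\Delta(G)-1$ neighbors in $G'$, all of them in $V(H_u)$ because they are adjacent to $u$, so $d_{H_u}(u) = \Delta(G)-1$ and in particular $\Delta(H_u) \geq \Delta(G)-1$. The crux is therefore to rule out $\Delta(H_u) = \Delta(G)$; once this is done, $\chi(H_u) = \Delta(H_u)+1$, and Lemma~\ref{lemma:brooks} applied to the connected graph $H_u$ delivers $H_u \cong K_{\Delta(G)}$ when $\Delta(G)>3$, and $H_u \cong C_{2a+1}$ for some natural number $a$ when $\Delta(G)=3$.

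The heart of the proof, and the step I expect to be the main obstacle, is an exchange argument powered by the minimality of $|E(G[S])|$. Suppose for contradiction that $\Delta(H_u) = \Delta(G)$. Then $\chi(H_u) = \Delta(H_u) = \Delta(G)$ together with $\vs(H_u) = 1$ lets me invoke Lemma~\ref{lemma:delta} on $H_u$ to obtain $x \in V(H_u)$ with $d_{H_u}(x) = \Delta(G)$ and $\chi(H_u - x) = \Delta(G)-1$. Since $d_{H_u}(u) = \Delta(G)-1$, we have $x \neq u$, hence $x \in V(G')$. Set $S^\ast = (S\setminus\{u\}) \cup \{x\}$; combining $\chi(H_u - x) = \Delta(G)-1$ with the chromatic-number bound on the other components of $H$ gives $\chi(G \setminus S^\ast) = \chi(H-x) = \Delta(G)-1$, while $|S^\ast| = |S|$, so $S^\ast$ is another minimum vertex-stability set of $G$. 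Claim~\ref{claim:S1} says $G[S \setminus \{u\}]$ is edgeless, so by the minimality of $|E(G[S])| = 1$, $x$ must have at least one neighbor in $S \setminus \{u\}$. However $S \setminus \{u\}$ is disjoint from $V(H_u)$, inside which $x$ already has $\Delta(G)$ neighbors, giving $d_G(x) \geq \Delta(G)+1$ and contradicting $d_G(x) \leq \Delta(G)$. This yields $\Delta(H_u) = \Delta(G)-1$ and the claim follows from Lemma~\ref{lemma:brooks} as indicated.
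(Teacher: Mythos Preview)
Your proof is correct and follows essentially the same route as the paper's: both reduce to showing $\Delta(H_u)=\Delta(G)-1$ by assuming $\Delta(H_u)=\Delta(G)$, invoking Lemma~\ref{lemma:delta} to produce a vertex $x$ of full degree in $H_u$, and then observing that the swapped set $S^\ast=(S\setminus\{u\})\cup\{x\}$ would have strictly fewer internal edges than $S$, contradicting the minimality of $|E(G[S])|$. The only cosmetic differences are that you justify $\Delta(H_u)\ge\Delta(G)-1$ via Claim~\ref{claim:S3} (the paper uses the generic bound $\chi\le\Delta+1$), and you phrase the final contradiction contrapositively via a degree count on $x$ rather than directly asserting that $S^\ast$ is independent.
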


Each connected component of $H$ except $H_u$ has chromatic number at most $\Delta(G)-1$ because it is a subgraph of $G'$, whose chromatic number is $\Delta(G) -1$. Since $\chi(H)=\Delta(G)$, we can conclude that $\chi(H_u) = \Delta(G) \leq \Delta(H_u)+1$, which implies $\Delta(H_u) \geq \Delta(G)-1$.
If $\Delta(H_u)= \Delta(G)-1$, then by Lemma~\ref{lemma:brooks}, $H_u \cong K_{\Delta(H_u)+1}=K_{\Delta(G)}$ if $\Delta(H_u)=\Delta(G)-1>2$ and $H_u \cong C_{2a+1}$ if $\Delta(H_u)=\Delta(G)-1=2$, as desired.
Otherwise, $\Delta(H_u)=\Delta(G)$ and by Lemma~\ref{lemma:delta}, there is a vertex $x \in V(H_u)$ such that $d_{H_u}(x)=\Delta(G)$ and $\chi(G \setminus S') < \Delta(G)$, where $S'=(S \setminus \{ u \}) \cup \{ x \}$.
It is easy to see that $S'$ is an independent set because $S \setminus \{ u \}$ is an independent set and all neighbors of $x$ are in $V(H_u)$ and not in $S \setminus \{ u \}$ (because $d_{H_u}(x)=\Delta(G)$ and $V(H_u) \cap (S \setminus \{ u \}) = \emptyset$).
Hence $|E(G[S'])| < |E(G(S])|$, a contradiction.

By the same method, $H_v \cong K_{\Delta(H_v)+1}=K_{\Delta(G)}$ for $\Delta(H_v)=\Delta(G)-1>2$ and $H_v \cong C_{2b+1}$ for $\Delta(H_v)=\Delta(G)-1=2$.  \smallqed

\begin{claim}\label{claim:cap}
$N_G(u) \cap N_G(v) = \emptyset$.
\end{claim}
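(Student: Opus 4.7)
The plan is to argue by contradiction: assume there is some $w \in N_G(u) \cap N_G(v)$. Since $E(G[S]) = \{uv\}$ by Claim~\ref{claim:S1}, $w$ cannot lie in $S$ (otherwise $G[S]$ would also contain $uw$ and $vw$), so $w \in V(G')$ with some color $c(w) = j$, and Claim~\ref{claim:S3} then forces $w$ to be the unique color-$j$ neighbor in $G'$ of each of $u$ and $v$, i.e.\ $u_j = v_j = w$. In both structural alternatives supplied by Claim~\ref{claim:complete} I would first establish $N_{G'}(u) = N_{G'}(v)$, and then leverage this either to produce a $K_{\Delta(G)+1}$ subgraph of $G$ or to construct an independent removal set of size $|S|$ with strictly fewer edges than $G[S]$.

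When $\Delta(G) > 3$, $H_u \cong H_v \cong K_{\Delta(G)}$, so $w$ is adjacent in $G$ to every other vertex of both $H_u$ and $H_v$. Since $u \notin V(H_v)$ and $v \notin V(H_u)$, the overlap of these two neighborhoods of $w$ is precisely $N_{G'}(u) \cap N_{G'}(v)$: on the one hand it has size at most $\Delta(G) - 2$ (each common neighbor carries a distinct color other than $j$, by Claim~\ref{claim:S3}), while on the other hand the degree bound $d_G(w) \leq \Delta(G)$ forces this overlap to have size at least $\Delta(G) - 2$. Equality yields $N_{G'}(u) = N_{G'}(v)$, and then $\{u, v\} \cup N_{G'}(u)$ induces $K_{\Delta(G)+1}$, contradicting $\chi(G) = \Delta(G)$.

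When $\Delta(G) = 3$, $H_u \cong C_{2a+1}$ and $H_v \cong C_{2b+1}$ with $a, b \geq 1$. The key identification is $V(H_u) \setminus \{u\} = V(H_v) \setminus \{v\}$: both sets equal the connected component of $w$ in $G'$, because any $G'$-vertex reachable from $V(H_u) \setminus \{u\}$ would already lie in the $G[V(G') \cup \{u\}]$-component of $u$ (and symmetrically for $v$). Both sets are induced paths with endpoint sets $\{w, u_2\}$ and $\{w, v_2\}$; comparing gives $u_2 = v_2$ and $a = b$. If $a = 1$, the four vertices $\{u, v, w, u_2\}$ induce $K_4$, contradicting $\chi(G) = 3$. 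If $a \geq 2$, then $w$ and $u_2$ are non-adjacent (endpoints of a path of length $\geq 3$), and the set $S^* = (S \setminus \{u, v\}) \cup \{w, u_2\}$ has $|S^*| = |S|$, satisfies $\chi(G \setminus S^*) = 2$ (extend $c$ by coloring $u, v$ with the two available colors, since their only remaining neighbors in $G \setminus S^*$ are each other), and $G[S^*]$ contains no edges, because neither $w$ nor $u_2$ has a $G$-neighbor in $S \setminus \{u, v\}$ and $w u_2 \notin E(G)$. This contradicts the minimality of $|E(G[S])| = 1$.

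The clique case reduces to a clean degree count. The principal technical step is the identification $V(H_u) \setminus \{u\} = V(H_v) \setminus \{v\}$ in the cycle case, which relies crucially on the fact that Claim~\ref{claim:complete} forces $V(H_u)$ to be exactly $\{u\}$ together with a single $G'$-component; once this is in place, the coincidence of path endpoints and then the $K_4$ or edge-minimality contradiction follow by inspection.
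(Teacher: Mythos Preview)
Your argument is correct and follows essentially the same route as the paper: in the $\Delta(G)>3$ case you use the degree bound on the common neighbor together with $H_u\cong H_v\cong K_{\Delta(G)}$ to force $N_{G'}(u)=N_{G'}(v)$ and hence a $K_{\Delta(G)+1}$, and in the $\Delta(G)=3$ case you identify $H_u-u=H_v-v$ (the paper asserts this directly; your ``component of $w$ in $G'$'' justification is a bit more explicit), deduce that $u$ and $v$ share both $G'$-neighbors, and then either find a $K_4$ or swap $\{u,v\}$ for those two common neighbors to obtain an independent removal set of the same size, contradicting the edge-minimality of $S$.
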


\noindent
{\bf{Case 1.}} $\Delta(G) > 3$. \\
By Claim~\ref{claim:complete}, $H_u \cong H_v \cong K_{\Delta(G)}$.
By contradiction, suppose that $N_G(u) \cap N_G(v) \neq \emptyset$ and let $x \in N_G(u) \cap N_G(v)$. Clearly, $x \notin S$ and $x \in V(H_u) \cap V(H_v)$. Therefore, $x$ is adjacent to all vertices of $N_G[u] \cup N_G[v]$. If $N_G[u] \neq N_G[v]$, then $d_G(x) > \Delta(G)$, a contradiction. Otherwise, $N_G[u] = N_G[v]$ and thus $G[N_G[u]] \cong K_{\Delta(G)+1}$ which means $\chi(G)=\Delta(G)+1$, a contradiction.

\medskip\noindent
{\bf{Case 2.}} $\Delta(G) = 3$. \\
By Claim~\ref{claim:complete}, $H_u \cong C_{2a+1}$ and $H_v \cong C_{2b+1}$ for some natural numbers $a$ and $b$. By contradiction, suppose that $N_G(u) \cap N_G(v) \neq \emptyset$. Since $N_G(u) \cap N_G(v) \neq \emptyset$, $H_u \cong C_{2a+1}$, $H_v \cong C_{2b+1}$, and $H_u$ and $H_v$ are connected components of $H$ and $H'$, we can conclude that $H_u -u = H_v -v$. So, $u$ and $v$ have two common neighbors. Let $x$ and $y$ be those vertices. If $2a+1=2b+1=3$, then $u$, $v$, $x$, and $y$ form a $K_4$, which means $\chi(G) > \Delta(G) = 3$, a contradiction. Otherwise, $x$ and $y$ are not adjacent and since $c(x) \neq c(y)$ by Claim~\ref{claim:S3}, we can color $u$ and $v$ by $c(x)$ and $c(y)$, respectively, to obtain a proper $(\Delta(G)-1)$-coloring of $G \setminus S'$, where $S'=(S \setminus  \{ u,v \})) \cup \{ x,y \}$. Since $d(x) \leq \Delta(G) = 3$ and $d(y) \leq \Delta(G) =3$, we can conclude that $x$ and $y$ do not have neighbors in $S'$ which means $S'$ is an independent set, a contradiction.  \smallqed

\medskip
Now, among all proper $(\Delta(G)-1)$-coloring of $G'$ let $c'$ be selected such that its smallest color class is as small as possible. We may assume without loss of generality that $C'_1$ is such a color class.

\begin{claim}\label{claim:colorClass}
If $x \in C'_1$, then $|N_G(x) \cap S| \leq 2$.
\end{claim}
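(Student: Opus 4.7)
My plan is to prove the contrapositive: assume that some vertex $x \in C'_1$ has at least three neighbors in $S$, and derive a contradiction with the choice of $c'$ as a proper $(\Delta(G)-1)$-coloring of $G'$ whose smallest color class is as small as possible. The guiding intuition is pigeonhole: if $x$ loses too many of its neighbors to $S$, then inside $G'$ it cannot see every non-$1$ color, so it can be recolored cheaply.

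The key step is a counting argument. Since $d_G(x) \leq \Delta(G)$ and $|N_G(x) \cap S| \geq 3$, we have $|N_{G'}(x)| = d_G(x) - |N_G(x) \cap S| \leq \Delta(G) - 3$. The colors other than $1$ form a set of size $\Delta(G) - 2$, and each $G'$-neighbor of $x$ occupies at most one such color under $c'$. Since $\Delta(G) - 3 < \Delta(G) - 2$, at least one color $j \in \{2, \ldots, \Delta(G)-1\}$ is missing from $c'(N_{G'}(x))$. Define $c''$ on $V(G')$ by $c''(x) = j$ and $c''(y) = c'(y)$ otherwise; then $c''$ is a proper coloring of $G'$, because the only possible new conflict sits at $x$, and no $G'$-neighbor of $x$ carries color $j$.

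I then split according to $|C'_1|$. If $|C'_1| \geq 2$, the new color class $C''_1 = C'_1 \setminus \{x\}$ has size $|C'_1|-1$, strictly smaller than the smallest color class of $c'$, contradicting the minimality assumption used to select $c'$. If $|C'_1| = 1$, then after recoloring $x$ the color $1$ disappears entirely and $c''$ becomes a proper $(\Delta(G)-2)$-coloring of $G'$, contradicting $\chi(G') = \Delta(G) - 1$. Either way we reach a contradiction, so $|N_G(x) \cap S| \leq 2$.

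I do not anticipate a significant obstacle here; the argument is a clean pigeonhole combined with the optimality property imposed on $c'$. The only subtlety worth flagging is the need to treat the singleton case $|C'_1| = 1$ separately, since there the contradiction comes from the chromatic number of $G'$ rather than from the minimality of the smallest color class.
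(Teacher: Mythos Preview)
Your argument is correct and follows essentially the same route as the paper: assume $|N_G(x)\cap S|\ge 3$, use the degree bound to find a color $j\ne 1$ missing from $N_{G'}(x)$, recolor $x$ to $j$, and contradict the minimality of $|C'_1|$. The only difference is cosmetic: you separate out the case $|C'_1|=1$ and phrase the contradiction there via $\chi(G')=\Delta(G)-1$, whereas the paper treats both cases uniformly under the minimality of the smallest color class (implicitly allowing an empty class, which amounts to the same thing).
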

Indeed, otherwise there exists $i \in [\Delta(G)-1] \setminus \{ 1 \}$ such that $N_G(x) \cap C'_i = \emptyset$. So, we can color $x$ by $i$ to obtain a proper $(\Delta(G)-1)$-coloring of $G'$. In this new coloring, the number of vertices colored by $1$ is $|C'_1|-1$. Therefore, we can deduce that $|C'_1|$ is not a smallest possible color class, a contradiction.  \smallqed

\medskip
Now, let $P$ be the connected component of $G[S \cup C'_1]$ containing $u$ and $v$.
By Claims~\ref{claim:S3} and~\ref{claim:S1}, $d_P(u)=d_P(v)=2$. By Claims~\ref{claim:S2} and~\ref{claim:S1}, $d_P(w) \leq 2$, for each $w \in S \cap V(P)$.
By Claim~\ref{claim:colorClass}, $d_P(x) \leq 2$, for each $x \in C'_1 \cap V(P)$. Hence, $\Delta(P) = 2$, which means $P$ is a path or a cycle.
Now, we have two cases. 

\medskip\noindent
{\bf{Case 1.}} $P$ is a path. \\
Let $P=x_k, \ldots, x_1, u, v, y_1, \ldots, y_l$ and let $X_1= \{x_1, x_3, \ldots \}$ and $X_2= \{x_2, x_4, \ldots \}$. Clearly, $X_1 \subseteq C'_1$ and $X_2 \subseteq S$. Now, we can color all vertices of $X_2 \cup \{ u \}$ by $1$ to obtain a proper $(\Delta(G)-1)$-coloring of $G \setminus S'$, where $S'=(S\setminus (X_2 \cup \{ u \})) \cup X_1$. Clearly, $|S'| \leq |S|$, and $S'$ is an independent set since $X_1$ and $S \setminus (X_2 \cup \{ u \})$ are independent sets and by Claim~\ref{claim:colorClass} there is no edge between $X_1$ and $S' \setminus X_1$. Hence, $0=|E(G[S'])| < |E(G[S])|=1$, a contradiction.

\medskip\noindent
{\bf{Case 2.}} $P$ is a cycle.\\
It is easy to see that $P$ is an odd cycle.
According to Claim~\ref{claim:complete}, $H_u \cong K_{\Delta(G)}$ or $H_u \cong C_{2a+1}$ for a natural number $a$. By Claim~\ref{claim:cap}, since $u$ and $v$ do not have a common neighbor, we can conclude that $|V(P)| > 3$.
Now, we have two subcases. 

\medskip\noindent
{\bf{Subcase 2.1.}} $H_u \cong K_{\Delta(G)}$. \\
In this subcase, $\Delta(G) > 3$, or $\Delta(G)=3$ and $H_u \cong K_3$.
Let $P=u, v, x_1, x_2, \ldots , x_{2k-1}, u$, where $k$ is a natural number greater than 1, and let $X_1= \{ x_1,x_3, \ldots, x_{2k-1} \}$ and $X_2= \{ x_2,x_4, \ldots, x_{2k-2} \}$. Clearly, $X_1 \subseteq C'_1$ and $X_2 \subseteq S$.
Since $X_2 \cup \{ v \}$ is an independent set, we can color all vertices of $X_2 \cup \{ v \}$ by $1$ to obtain a proper $(\Delta(G)-1)$-coloring of $G \setminus S'$, where $S'=(S \setminus (X_2 \cup \{ v \})) \cup X_1 $. Clearly, $|S'|=|S|-|X_2 \cup \{ v \}|+|X_1|=|S|$. It is easy to see that there is only one edge $ux_{2k-1}$ between $X_1$ and $S \setminus (X_2 \cup \{ v \})$. In addition, $X_1$ and $S \setminus (X_2 \cup \{ v \})$ are independent sets; hence, $E(G[S'])=\{ ux_{2k-1} \}$. Clearly, $x_{2k-1} \in V(H_u)$. Since $H_u \cong K_{\Delta(G)}$ and $\Delta(G) \geq 3$, there is a vertex in $V(H_u)$ adjacent to both $u$ and $x_{2k-1}$ which means $N_G(u) \cap N_G(x_{2k-1}) \neq \emptyset$. Since $|S'|=|S|$ and $|E(G[S'])|=1$, by the same method as in the proof of Claim~\ref{claim:cap}, we can show that $N_G(u) \cap N_G(x_{2k-1}) = \emptyset$, a contradiction.

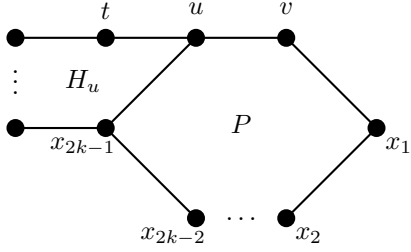
\begin{figure}[ht!]
\begin{center}
\begin{tikzpicture}[scale=1.0,style=thick,x=0.6cm,y=0.6cm]
\def\vr{3pt}
\coordinate(u) at (4,6);
\coordinate(v) at (6,6);
\coordinate(x_{2k-1}) at (2,4);
\coordinate(x_1) at (8,4);
\coordinate(x_{2k-2}) at (4,2);
\coordinate(x_2) at (6,2);
\coordinate(t) at (2,6);
\coordinate(t') at (0,6);
\coordinate(t'') at (0,4);


\draw (u) -- (v);
\draw (u) -- (x_{2k-1});
\draw (v) -- (x_1);
\draw (x_{2k-1}) -- (x_{2k-2});
\draw (x_1) -- (x_2);
\draw (u) -- (t);
\draw (t') -- (t);
\draw (t'') -- (x_{2k-1});

\draw (4,7) node[below] {$u$};
\draw(6,7) node[below] {$v$};
\draw(1.5,4) node[below] {$x_{2k-1}$};
\draw(3.5,2) node[below] {$x_{2k-2}$};
\draw (2,7) node[below] {$t$};
\draw(8.5,4) node[below] {$x_{1}$};
\draw(6.5,2) node[below] {$x_{2}$};

\draw(5,4.5) node[below] {$P$};
\draw(1.5,5.5) node[below] {$H_u$};

\draw(u)[fill=black] circle(\vr);
\draw(v)[fill=black] circle(\vr);
\draw(x_{2k-1})[fill=black] circle(\vr);
\draw(x_1)[fill=black] circle(\vr);
\draw(x_{2k-2})[fill=black] circle(\vr);
\draw(x_2)[fill=black] circle(\vr);
\draw(t)[fill=black] circle(\vr);
\draw(t')[fill=black] circle(\vr);
\draw(t'')[fill=black] circle(\vr);

\node[draw=none] at (5,2) {$\ldots$};
\node[draw=none] at (0,5.2) {$\vdots$};

\end{tikzpicture}
\end{center}
\caption{$\Delta(G)=3$, and $P$ and $H_u$ are odd cycles}
\label{fig:delta}
\end{figure}

\medskip\noindent
{\bf{Subcase 2.2.}} $H_u=C_{2r+1}$ for some $r>1$. \\
In this subcase $\Delta(G)=3$. With no loss of generality, assume that $c'$, the coloring described above, is a coloring where $|V(P)|$ is as large as possible. We have $\{ u,x_{2k-1} \} \subseteq V(P) \cap V(H_u)$, cf.~Fig.~\ref{fig:delta}. Moreover, it is also easy to see that $V(P) \cap V(H_u)=\{ u,x_{2k-1} \}$ because if $w \in (V(P) \cap V(H_u)) \setminus \{ u, x_{2k-1} \}$, then $c'(w)=1$ and $w$ has two neighbors with color $2$ and $w$ has two neighbors in $S$ which contradicts $\Delta(G)=3$. Now, let $t$ be the other neighbor of $u$ in $H_u$ in the proper $2$-coloring of $G'$. Clearly, $c'(x_{2k-1})=1$ and $c'(t)=2$. Let $c''$ be a proper $(\Delta(G)-1)$-coloring ($2$-coloring) of $G'$ switching the color of vertices in $H_u -u$ (keeping the color of other vertices the same as $c'$). Claim~\ref{claim:colorClass} still holds for $c''$ because $|C'_1|=|C''_1|$. Now, let $P'$ be the connected component of $G[S \cup C''_1]$ containing $u$ and $v$. If $P'$ is a path, then simillar to the Case 1, we are done. Otherwise, $P'$ is a cycle. Obviously, $V(P) \setminus \{ x_{2k-1} \} \subseteq V(P')$ and $t \in V(P')$, and since $|V(P)| \geq |V(P')|$ and $P'$ is a cycle, we can conclude that $x_{2k-2}t \in E(G)$. Now, one can color $u$ and $x_{2k-2}$ by $2$ to obtain a proper $2$-coloring of $G \setminus S''$, where $S''=(S \setminus \{ u,x_{2k-2} \} ) \cup \{ x_{2k-1} \}$. Since $|S''| < |S|$, we have a final contradiction. \qed

\section{Discussion on graphs with smaller chromatic number}
\label{sec:discuss}

In the next remark we demonstrate that Theorem~\ref{thm:main} does not extend to the case when $\chi(G) \le \frac{\Delta(G)}{2}$.

\begin{remark}
\label{rem:1}
\normalfont
Let $G_{n,k}$, $n\ge 2$, $k\ge 3$, be the graph obtained from the cycle $C_{2n}$ as follows. For each vertex $u$ of the cycle take two disjoint complete graphs $K_k$, select a fixed vertex in each of them, and identify the two vertices with $u$. Then it is straightforward to verify that  $\chi(G_{n,k}) = k$, $\Delta(G_{n,k}) = 2k$, $\vs(G_{n,k}) = 2n$, and $\ivs(G_{n,k}) = 3n$. 
\end{remark}

In view of Remark~\ref{rem:1} we pose the next problem for which we feel the answer is yes. 

\begin{problem}\label{problem1}
Is it true that if $G$ is a graph with $\chi(G) \geq \frac{\Delta(G)}{2}+1$, then $\vs(G)=\ivs(G)$?
\end{problem}

Remark~\ref{rem:1} demonstrates that in Problem~\ref{problem1} the assumption $\chi(G) \geq \frac{\Delta(G)}{2}+1$ cannot be weakened to $\chi(G) \geq \frac{\Delta(G)}{2}$. We next demonstrate that furthermore the assumption cannot be weakened to $\chi(G) \geq \frac{\Delta(G)+1}{2}$. 

 Let $k \geq 3$ be an odd positive number and let $H_k$ be the graph obtained from two copies of $K_{\frac{k+1}{2}}$ by identifying a vertex from each of the copies.  Let $G_k$ be the graph obtained from two disjoint copies of $H_k$ by adding an edge between the maximum degree vertices in the two copies. Then we have $\Delta(G_k) = k$, $\chi(G_k) = \frac{k+1}{2} = \frac{\Delta(G_k)+1}{2}$, and $2 = \vs(G_k) < \ivs(G_k) = 3$.

Using a similar construction we next show that the ratio 
$$\frac{\ivs(G_r)}{\vs(G_r)}$$
can be arbitrary large. Let $k$ be a natural number and let $H'_k$ be the graph obtained from $k$ copies of $K_{3}$ by identifying a vertex from each of the copies. Let further $G'_k$ be the graph obtained from two disjoint copies of $H'_k$ by adding an edge between the two vertices of maximum degrees in the copies. Then $\chi(G_k') = 3$, $\vs(G_k')=2$, and $\ivs(G_k')=k+1$. This example shows that for any natural number $r$, there exists a graph $G_r$ such that $\frac{\ivs(G_r)}{\vs(G_r)} > r$.

\section{Relation with $\es$ and a Nordhaus-Gaddum-type result}\label{sec:es3}
\label{sec:more}

In this final section we first prove that if the chromatic number of a graph is large, then its chromatic vertex stability number is equal to its chromatic edge stability number. More precisely, we have the following result. 

\begin{theorem}\label{theorem:chromatic_vertex_stability_index}
If $G$ is a graph with $\chi(G) > \frac{\Delta(G)}{2}+1$, then $\vs(G)=\es(G)$.
\end{theorem}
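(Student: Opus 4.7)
The plan is to establish both inequalities $\vs(G)\le\es(G)$ and $\es(G)\le\vs(G)$, with the hypothesis only needed for the second. Write $k=\chi(G)$; the assumption $\chi(G)>\Delta(G)/2+1$ is equivalent to $\Delta(G)<2(k-1)$, and hence $\lfloor\Delta(G)/(k-1)\rfloor\le 1$. This single pigeonhole inequality is what drives the proof.

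For $\vs(G)\le\es(G)$, which in fact holds without any hypothesis, I take a minimum edge-stability set $F$ and pick one endpoint of each edge in $F$ to form $S_0$. Every edge of $F$ has an endpoint in $S_0$, so $G-S_0$ is a subgraph of $G-F$ and $\chi(G-S_0)\le k-1$. Removing one vertex from any graph can drop the chromatic number by at most $1$, so by adding the vertices of $S_0$ back one at a time one obtains some $S\subseteq S_0$ with $\chi(G-S)=k-1$ exactly, giving $\vs(G)\le|S|\le|S_0|\le|F|=\es(G)$.

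For the main direction $\es(G)\le\vs(G)$, fix a minimum vertex-stability set $S=\{v_1,\dots,v_s\}$ with $s=\vs(G)$ and a proper $(k-1)$-coloring $c_0$ of $G-S$. The strategy is to extend $c_0$ greedily to all of $V(G)$ and let $F$ be the set of resulting monochromatic edges. Process $v_1,v_2,\dots,v_s$ in any order; at step $i$ assign $v_i$ a color in $[k-1]$ used by the fewest already-colored neighbors. When $v_i$ is processed its colored neighbors consist of those in $V(G)\setminus S$ together with some $v_j$ with $j<i$, totaling at most $d_G(v_i)\le\Delta(G)$ vertices. By pigeonhole some color in $[k-1]$ is used by at most $\lfloor\Delta(G)/(k-1)\rfloor\le 1$ of them, so at most one monochromatic edge is created at each step. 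Since every monochromatic edge in the final coloring is charged to the later of its two endpoints, there is no double counting, and $|F|\le s$. The extended coloring properly $(k-1)$-colors $G-F$, so $\chi(G-F)\le k-1$, and trimming $F$ as in the first direction yields $\es(G)\le s=\vs(G)$.

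The main obstacle is the pigeonhole accounting in the greedy step, and it is tight: the hypothesis is used precisely in the strict form $\Delta(G)<2(k-1)$, at which point the floor bound collapses from $\ge 2$ to $\le 1$. Some care is needed to avoid double-counting monochromatic edges lying entirely inside $S$, which is handled by the charging scheme above. At or below the threshold $\chi(G)=\Delta(G)/2+1$ the same argument degrades to only $\es(G)\le 2\,\vs(G)$, so any strengthening would require a different approach.
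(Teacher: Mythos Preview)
Your proof is correct and rests on the same pigeonhole observation as the paper's: each vertex of a minimum stability set $S$ has some color class containing at most one of its neighbors, so one edge-deletion per vertex of $S$ suffices. The paper packages this as an induction on $\vs(G)$ (find such an edge $e$ for $u_1$, pass to $G'=G-e$ with $\vs(G')=\vs(G)-1$, recurse), whereas you unroll the same mechanism into a single greedy extension with a charging argument for monochromatic edges inside $S$; the two arguments are essentially equivalent, yours being slightly more direct.
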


\begin{proof}
Clearly, $\vs(G) \leq \es(G)$, hence we need to show that $\vs(G) \geq \es(G)$. We proceed by induction on $\vs(G)$. 

The base case is when $\vs(G)=1$. Let $u$ be a vertex such that $\chi(G - u) = \chi(G)-1$. Let $c$ be a proper $(\chi(G)-1)$-coloring of $G - u$. There is a color class $C_i$ such that $|N_G(u) \cap C_i| \leq 1$, because otherwise $d_G(u) \geq 2(\chi(G)-1) > \Delta(G)$, a contradiction. If $N_G(u) \cap C_i=\emptyset$ for some color class $C_i$, then we can color $u$ by $i$ to obtain a proper $(\chi(G)-1)$-coloring of $G$, a contradiction. Otherwise, $|N_G(u) \cap C_i|=1$ and let $e$ be the edge between $u$ and $C_i$. So, we can color $u$ by $i$ to obtain a proper $(\chi(G)-1)$-coloring of $G-e$. Thus, $1=\vs(G) \geq \es(G)$, and the base case is proved. 

Now, suppose $\vs(G)=k > 1$ and let $S=\{u_1,\ldots ,u_k\} \subseteq V(G)$ be a subset such that $\chi(G \setminus S) = \chi(G)-1$. Let $c$ be a proper $(\chi(G)-1)$-coloring of $G \setminus S$. Similar to the base case of the induction, there is a color class $C_i$ such that $|N_G(u_1) \cap C_i|=1$. Now, suppose that $e$ is the edge between $u_1$ and $C_i$ and define $G'=G-e$. Clearly, $\vs(G')=k-1$. By induction, $\vs(G') \geq \es(G')$ and thus $\es(G) \leq \es(G')+|\{ e \}| \leq k=\vs(G)$, which proves the induction step.
\end{proof}

The lower bound given in Theorem~\ref{theorem:chromatic_vertex_stability_index} is sharp. Let $k$ be an even positive integer and $G$ be the graph obtained from two copies of $K_{\frac{k}{2}+1}$ which have only one vertex in common. Note that $\Delta(G) = k$, $\chi(G) = \frac{k}{2}+1 = \frac{\Delta(G)}{2}+1$, and $\vs(G)=1$, $\es(G)=2$.

To conclude the paper, we give the following Nordhaus-Gaddum-type result. 

\begin{proposition}\label{lemma:vs_upperbound2}
If $G$ is a not edgeless and not complete graph, then $\vs(G) + \vs(\overline{G}) \leq n(G)+1$.
\end{proposition}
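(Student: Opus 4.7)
The plan is to bound each of $\vs(G)$ and $\vs(\overline{G})$ by the size of a smallest color class in an optimal proper coloring, and then exploit the fact that, under the hypotheses, both chromatic numbers are at least two.

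The key lemma I would establish is: for every graph $H$ with $\chi(H) \geq 2$,
\[
\vs(H) \;\leq\; \left\lfloor \frac{n(H)}{\chi(H)} \right\rfloor .
\]
To prove it, fix a proper $\chi(H)$-coloring with color classes $C_1,\dots,C_{\chi(H)}$ and let $C$ be one of smallest size, so $|C|\leq \lfloor n(H)/\chi(H)\rfloor$. After removing $C$ from $H$, the remaining color classes give a proper $(\chi(H)-1)$-coloring, hence $\chi(H-C)\leq \chi(H)-1$. If equality holds we are done. Otherwise, order $C$ as $v_1,\dots,v_t$ and look at the chromatic numbers of the graphs $H,\,H-\{v_1\},\,\dots,\,H-C$. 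Consecutive terms differ by a single vertex, so their chromatic numbers differ by at most one; since the sequence starts at $\chi(H)$ and ends at most $\chi(H)-1$, some prefix of the deletions yields chromatic number exactly $\chi(H)-1$, and this prefix is a subset of $C$.

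Now, since $G$ is neither edgeless nor complete, $\chi(G)\geq 2$ and $\chi(\overline{G})\geq 2$. Applying the lemma to both $G$ and $\overline{G}$ gives
\[
\vs(G)+\vs(\overline{G}) \;\leq\; \left\lfloor \frac{n(G)}{\chi(G)}\right\rfloor + \left\lfloor \frac{n(G)}{\chi(\overline{G})}\right\rfloor \;\leq\; \frac{n(G)}{2}+\frac{n(G)}{2} \;=\; n(G) \;\leq\; n(G)+1,
\]
which is in fact one unit stronger than the asserted bound. There is no real obstacle in the argument; the only mildly delicate point is verifying that a subset of the smallest color class can be chosen so that the chromatic number decreases by \emph{exactly} one (rather than by more), and this is precisely what the monotonicity argument inside the lemma handles.
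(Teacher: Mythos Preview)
Your proof is correct, and in fact simpler and slightly stronger than the paper's. Both arguments begin with the same lemma $\vs(H)\le \lfloor n(H)/\chi(H)\rfloor$, obtained by deleting a smallest color class in an optimal coloring. (Your extra monotonicity step is harmless but unnecessary: if $C$ is a color class in a $\chi(H)$-coloring and $\chi(H-C)\le \chi(H)-2$, then adjoining $C$ as one new class would give a proper $(\chi(H)-1)$-coloring of $H$, a contradiction; so deleting $C$ always drops the chromatic number by exactly one.) The two approaches diverge after the lemma. The paper invokes the classical Nordhaus--Gaddum inequalities $\chi(G)+\chi(\overline G)\le n(G)+1$ and $\chi(G)\chi(\overline G)\ge n(G)$ to bound $n(G)/\chi(G)+n(G)/\chi(\overline G)$ by $n(G)+1$. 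You instead use only the elementary observation that the hypotheses force $\chi(G)\ge 2$ and $\chi(\overline G)\ge 2$, giving $n(G)/2+n(G)/2=n(G)$. Your route avoids Nordhaus--Gaddum entirely and yields the sharper conclusion $\vs(G)+\vs(\overline G)\le n(G)$, which is tight (take $G=C_4$, where $\vs(C_4)=\vs(\overline{C_4})=2$).
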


\begin{proof}
Considering a proper $\chi(G)$-coloring of $G$ and its smallest color class, we see that $\vs(G) \leq \frac{n(G)}{\chi(G)}$. The clasical Nordhaus-Gaddum inequalities~\cite{Nord-1956} assert that $\chi(G) + \chi(\overline{G}) \le n(G) + 1$ and $\chi(G) \chi(\overline{G}) \ge n(G)$, hence we can estimate as follows: 
$$\vs(G) + \vs(\overline{G}) \leq \frac{n(G)}{\chi(G)} + \frac{n(G)}{\chi(\overline{G})} =  n(G) \frac{\chi(G) + \chi(\overline{G})}{\chi(G) \chi(\overline{G})} \le n(G) \frac{n(G)+1}{n(G)} = n(G) + 1$$
and we are done.
\end{proof}

\section*{Acknowledgments}

We thank one of the reviewers for careful reading of the article and for many very helpful comments.

The research of the first author was supported by grant number G981202 from Sharif University of Technology. Sandi Klav\v{z}ar acknowledges the financial support from the Slovenian Research Agency (research core funding P1-0297 and projects N1-0095, J1-1693, J1-2452).

\end{document}